\tikzstyle directed=[postaction={decorate,decoration={markings, mark=at position .65 with {\arrow{stealth}}}}]
\tikzstyle reverse directed=[postaction={decorate,decoration={markings,
   mark=at position .65 with {\arrowreversed{stealth};}}}]
\numberwithin{figure}{section}
\newtheorem{lemma}{Lemma}[section]
\newtheorem{corollary}[lemma]{Corollary}
\newtheorem{theorem}[lemma]{Theorem}
\newtheorem{proposition}[lemma]{Proposition}
\newtheorem{definition}[lemma]{Definition}
\newtheorem{example}[lemma]{Example}
\def\T+{{\mathbb T_d^+}}
\def\A{\mathcal{A}}
\def\conv {\mathop {\rm conv }\nolimits}
\begin{document}

\subjclass[2010]{17D92, 46C99, 60J10}
\keywords{Genetic Algebra, Evolution Algebra, Hilbert Space, Markov Chain}

\title[Hilbert Evolution Algebras]{Hilbert Evolution Algebras and its connection with discrete-time Markov Chains}

\author[Sebastian J. Vidal]{Sebastian J. Vidal}
\address{Sebastian J. Vidal: Departamento de Matem\'atica, Facultad de Ingenier\'ia, Universidad Nacional de la Patagonia ``San Juan Bosco'', Km 4, CP 9000, Comodoro Rivadavia, Chubut, Argentina.}
\email{sebastianvidal79@gmail.com}

\author[Paula Cadavid]{Paula Cadavid}
\address{Paula Cadavid: Centro de Matemática, Computação e Cognição, Universidade Federal do ABC, Avenida dos Estados, 5001-Bangu-Santo Andr\'e - SP, Brazil.}
\email{pacadavid@gmail.com}

\author[Pablo M. Rodriguez ]{Pablo M. Rodriguez}
\address{Pablo M. Rodriguez: Centro de Ci\^encias Exatas e da Natureza, Universidade Federal de Pernambuco, Av. Prof. Moraes Rego, 1235 - Cidade Universit\'aria - Recife - PE, Brazil.}
\email{pablo@de.ufpe.br}

\begin{abstract}
Evolution algebras are non-associative algebras. In this work we provide an extension of this class of algebras, in the context of Hilbert spaces and  illustrate the applicability of our approach by discussing a connection with discrete-time Markov chains with infinite countable state space. 
\end{abstract}

\maketitle

%%%%%

\section{Introduction}

In this paper we contribute with the Theory of Evolution Algebras, which is developed around a special class of genetic algebras. At the beginning, the notion of evolution algebra was formulated in \cite{tv} as an algebraic way to mimic the self-reproduction of alleles in non-Mendelian genetics. Fortunately, like many objects in Mathematics, this concept proved to be very flexible for the comparison with concepts from different fields. The best reference to start studying the subject is the seminal work of Tian, \cite{tian}, where the author, after the formulation of basic properties for these algebras, explores an interesting correspondence between them and the theory of discrete-time Markov chains. In the same reference the reader may find a summary of possible connections with other fields like graph theory, group theory, statistical physics, and others. An evolution algebra is defined as follows.

\begin{definition}\label{def:evolalgTian} Let  $\mathbb{K}$  be a field and 
let $\A:=(\A,\cdot\,)$ be a $\mathbb{K}$-algebra. We say that $\A$ is an evolution algebra if it admits a basis $S:=\{e_i\}_{i\in\Lambda}$, such that 
\begin{eqnarray}
e_i \cdot e_i &=&\displaystyle \sum_{k\in\Lambda} c_{ki} e_k, \text{ for }i\in \Lambda,\label{eq:ea01}\\[.2cm]
e_i \cdot e_j &=& 0, \text{ for }i,j\in \Lambda \text{ such that }i\neq j.\label{eq:ea02}
\end{eqnarray} 
\end{definition}

The scalars $c_{ki}\in \mathbb{K}$ are called the structure constants of $\mathcal{A}$ relative to $S$. A basis $S$ satisfying \eqref{eq:ea02} is called natural basis of $\mathcal{A}$. We emphasize that in the definition above  basis means Hamel basis; i.e., a maximal linear independent subset. It implies that for a fixed $i \in \Lambda$ only a finite number of constants $c_{ki}$ are non-zero. 

Currently, there is a wide literature about this issue and its consequences. Here we mention some of the recent works, and we refer the reader to the references therein for a deeper study of the theory. In \cite{PMP2,PMPY,camacho/gomez/omirov/turdibaev/2013,casado/molina/velasco/2016,Casas/Ladra/2014} the reader may find a survey of properties and results for general evolution algebras; the works in \cite{Elduque/Labra/2015,PMP,PMPT,reis} are devoted to the connection between evolution algebras and graphs together with some related properties; and in \cite{Falcon/Falcon/Nunez/2017,Labra/Ladra/Rozikov/2014} one may see a good review of results with relevance in genetics and other applications.

We are interested in providing a generalization of Definition \ref{def:evolalgTian} which be able to deal with infinite-dimensional spaces and, at the same time, to include an application not covered by it. Let us start with our motivation. If $\A$ is an evolution algebra such that $c_{ki}\in[0,1]$, for any $i,k\in \Lambda$, and $\sum_{k\in \Lambda} c_{ki}=1$, for any $i\in \Lambda$, then $\A$ is called a Markov evolution algebra. The name is due to a correspondence between evolution algebras and discrete-time Markov chains given in \cite{tian}. To see the connection, let us remember some basic notation for Markov chains. Consider a probability space $(\Omega,\mathcal{F},\mathbb{P})$; i.e., $\Omega$ is an arbitrary non-empty set, $\mathcal{F}$ is a $\sigma$-field of subsets of $\Omega$ and $\mathbb{P}$ is a probability measure on $\mathcal{F}$. A sequence of random variables $\{X_{n}\}_{n\geq 0}$ living in this probability space and taking values in $\mathcal{X}:=\{x_i\}_{i\in\Lambda}$, where $\Lambda$ is a countable set of indices, is called a discrete-time Markov chain if it satisfies the Markov's property; namely,  
$$\mathbb{P}(X_{n+1}=x_j|X_{0}=x_{i_0},X_{1}=x_{i_1},\ldots,X_{n-1}=x_{i_{n-1}},X_{n}=x_i)=\mathbb{P}(X_{n+1}=x_j|X_{n}=x_i)=:p_{ij},$$ for any $n\geq 1$, and for any subset $\{x_{i_0},x_{i_1},\ldots,x_{i_{n-1}},x_i,x_j\}\subset \mathcal{X}$ with $\{i_0,i_1,\ldots,i_{n-1},i,j\}\subset \Lambda$. The values $p_{ij}$ are called the transition probabilities of the Markov chain and do not depend on $n$; i.e., $\{X_{n}\}_{n\geq 0}$ is an homogeneous Markov chain. In \cite[Chapter 4]{tian} it is defined an evolution algebra $\A$ with a natural basis $S:=\{e_i\}_{i\in\Lambda}$ in such a way that each state of the Markov chain is in correspondence with each generator of $S$, and $c_{ki}=p_{ik}$, for any $i,k\in\Lambda$.

%Moreover, the author suggest that the linear operator $L:\mathcal{A}\rightarrow \mathcal{A}$ defined by $L(v):=\left(\sum_{i\in\Lambda}e_i\right)\cdot v$, for $v\in\mathcal{A}$, can be used in the context of evolution algebras as the transition matrix, whose entries are transition probabilities, is used in the context of Markov chains. 
As far as we known \cite[Chapter 4]{tian} was the first in proposing the interplay between evolution algebras and Markov chains. In such a work many well-known results coming from Markov chains were stated in the language of Markov evolution algebras. We point out that this is an interesting connection which deserves to be explored because it represents a new framework to describe random phenomena; i.e. through techniques of non-associative algebras. However, we have to take care when dealing with the connection of these mathematical objects. Although \cite[Theorem 16, page 54]{tian} claims, using the correspondence mentioned above, that for any homogeneous Markov chain there is an evolution algebra whose structure constants are transition probabilities, and whose generator set is the state space of the Markov chain, this is not totally true whether the state space has infinitely many elements. %Let us illustrate our claim with an example. 

\begin{example}\label{ex:mc} [A Discrete-time Markov chain that does not determine an evolution algebra according to Definition \ref{def:evolalgTian}] Let $\{X_n\}_{n\geq 0}$ be a Markov chain with state space given by $\mathcal{X}=\mathbb{N}\cup\{0\}$ and transition probabilities given by $p_{0i}=p_i>0$, for any $i\in\mathbb{N}$, where $\sum_{i=1}^{\infty}p_i=1$, and $p_{i (i-1)}=1$ for any $i\in\mathbb{N}$. See Figure \ref{fig:mc} for an illustration of the transitions of this Markov chain. In words, from any state $i\neq 0$ the process ``jumps'' to state $i-1$ with probability $1$, and as soon as the process hits state $0$, it jumps to state $i$ with probability $p_i$. Notice that from state $0$ we can go to infinitely many states with positive probability. If we assume that there exists an evolution algebra whose generator set is in correspondence with the state space of this Markov chain, namely $S=\{e_i\}_{i\in \mathbb{N}\cup\{0\}}$, then, taking $c_{ki}=p_{ik}$, it should be for $i\neq 0$, $e_i^2=e_{i-1}$, while
\begin{equation*}
e_0^2 = \sum_{i\in \mathbb{N}} p_i e_i,
\end{equation*}
with $p_i> 0$ for any $i\in \mathbb{N}$. But this is a contradiction because according to Definition \ref{def:evolalgTian}, since $S$ is a Hamel basis, the numbers $c_{ki}$ can be non-zero only for a finite number of $j$'s.
\end{example}

\begin{figure}
    \centering
    \begin{tikzpicture}[scale=0.8, every node/.style={scale=0.8}]

\draw [thick] (0,0) circle (7pt);
\draw (0,0) node[font=\footnotesize] {$0$};
\draw [thick] (2,0) circle (7pt);
\draw (2,0) node[font=\footnotesize] {$1$};
\draw [thick] (4,0) circle (7pt);
\draw (4,0) node[font=\footnotesize] {$2$};
\draw [thick] (6,0) circle (7pt);
\draw (6,0) node[font=\footnotesize] {$3$};
\draw [thick] (8,0) circle (7pt);
\draw (8,0) node[font=\footnotesize] {$4$};

%%% flechas

% 0 y 1
%\draw [thick, directed] (0,0.25) to [out=140,in=220,looseness=8] (0,-0.25);
\draw [thick, reverse directed] (0,0.25) to [bend left=45] (2,0.25);
%\draw [thick, directed] (0,0.25) to [out=140,in=220,looseness=8] (0,-0.25);
\draw [thick, reverse directed] (2,-0.25) to [bend left=45] (0,-0.25);
\draw [thick, reverse directed] (4,-0.25) to [bend left=60] (0,-0.25);
\draw [thick, reverse directed] (6,-0.25) to [bend left=75] (0,-0.25);
\draw [thick, reverse directed] (8,-0.25) to [bend left=90] (0,-0.25);
\draw [dashed, gray, reverse directed] (10,-1.25) to [bend left=90] (0,-0.25);
% 1 y 2
\draw [thick, reverse directed] (2,0.25) to [bend left=45] (4,0.25);
% 2 y 3
\draw [thick, reverse directed] (4,0.25) to [bend left=45] (6,0.25);
\draw [thick, reverse directed] (6,0.25) to [bend left=45] (8,0.25);
\draw [<-, dashed, gray] (8,0.25) to [bend left=45] (10,0.45);
%\draw [thick, reverse directed] (4.25,0) to (5.75,0);
%\draw [thick, reverse directed] (2.25,0) to (3.75,0);
%\draw [thick, reverse directed] (0.25,0) to (1.75,0);
% 3 y 4
%\draw [thick, reverse directed] (6.25,0) to (7.75,0);

% probab

\draw (1,1) node[font=\footnotesize] {$1$};
\draw (1.6,-0.8) node[font=\footnotesize] {$p_1$};
\draw (3.3,-1.3) node[font=\footnotesize] {$p_2$};
\draw (5.4,-1.6) node[font=\footnotesize] {$p_3$};
\draw (7.3,-2.1) node[font=\footnotesize] {$p_4$};
\draw (3,1) node[font=\footnotesize] {$1$};
\draw (5,1) node[font=\footnotesize] {$1$};
\draw (7,1) node[font=\footnotesize] {$1$};
\draw (9,0) node[font=\footnotesize] {$\cdots$};

\end{tikzpicture}
    \caption{Graphical representation of the Markov chain of Example \ref{ex:mc}. States are represented by vertices of the directed graph. Directed edges represent possible transitions between states while weights in the edges represent the respective transition probabilities.}
    \label{fig:mc}
\end{figure}
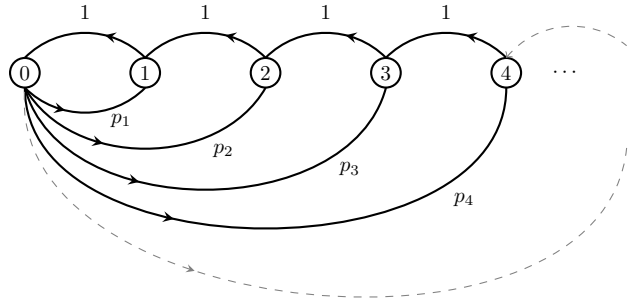

%Thus, a more general definition of evolution algebra would be suitable for the development of a theory connecting these non-associative algebras with other mathematical structures. 

The previous example suggest that although many connections with other fields have been suggested in literature, still some gaps exist whether one consider applications involving infinite-dimensional spaces. This is because in the original definition of Tian \cite{tian} the basis is implicitly assumed to be a Hamel basis. Thus, the sum \eqref{eq:ea01} can have only a finite number of nonzero terms. Following this work, in \cite{casado/molina/velasco/2016} the authors consider infinite-dimensional evolution algebras but still only with finite sums. In order to allow an infinite number of nonzero terms in the series, we need to consider other structures to give a meaning to the sum \eqref{eq:ea01}. The usual way to do this is through Functional Analysis, by introducing topologies and different notions of convergence. One approach to do this was taken in \cite{Mellon/Velasco}. The novelty of such a work is the concept of Banach evolution algebras capable to deal with infinite-dimensional algebras whose natural basis are uncountable. However, the case of evolution algebras with an infinite countable natural basis is not covered by their definition. With the motivation of fulfill this gap we propose a different approach and we work with Hilbert spaces, which leads us to consider other kind of basis; namely, Schauder basis. In other words, we propose an extension of Definition \ref{def:evolalgTian} by providing an evolution algebra structure in a given Hilbert space. We call that new structure a Hilbert evolution algebra, and after stating some basic properties, we illustrate its applicability to the connection with discrete-time Markov chains.

The rest of the paper is subdivided into two sections. In Section 2 we introduce the concept of Hilbert evolution algebra, we define its associated evolution operator, and we discuss a condition under which this operator is continuous. In Section 3 we include our application to the connection between these objects and discrete-time Markov chains.

%The authors use the word basis in the sense of Hamel so the Banach evolution algebras have finite or uncountable basis. 

\section{Hilbert Evolution Algebras}
We start with some definitions and notation. Let  $V$ be a vector space over  $\mathbb{K}$, where  $\mathbb{K}$ is $\mathbb{R}$  or $\mathbb{C}$, with inner product $ \langle \, , \, \rangle$. A subset $\{e_k\}_{k\in\Lambda}\subset V$, where $\Lambda$  is a countable set, is a Schauder Basis of $V$ if any $v\in V$ has a unique representation
	\begin{equation*} \label{rep}
	v=\sum_{k\in\Lambda} v_k e_k, \,\,\text{  where } v_{k}  \in  \mathbb{K}.
	\end{equation*}
We say that $V$ is a Hilbert space if it is also a complete metric space with respect to the distance function induced by the inner product.  A subset $\{e_k\}_{k\in\Lambda}\subset V$ is an orthonormal basis if every $v\in V$ can be expressed as
\begin{equation*}
    v=\sum_{k\in\Lambda}\langle v,e_k\rangle e_k.
\end{equation*}
On the other hand, we  say that $V$ is separable if it has a countable dense subset. In this case, any orthonormal basis is countable. The Gram-Schmidt orthonormalization process proves that every separable Hilbert space has an orthonormal basis. We highlight  that if $V$ is finite-dimensional, the notion of Schauder basis coincides with that of Hamel basis. 

We shall define an evolution algebra structure in a Hilbert space $\A$. In order to do it two questions should be considered. The first one is that we would like to define a product in $\A$ satisfying relations \eqref{eq:ea01} and  \eqref{eq:ea02}. The problem with that is the convergence of the series involved in the definition of such a new product; that is, if $v, w\in \A$ then $v\cdot w$ may not be in $\A$. Specifically, if we write $v=\sum_{k\in\Lambda}v_k e_k$ and $w=\sum_{k\in\Lambda}w_k e_k$  where  $\{e_k\}_{k\in\Lambda}$ is an orthonormal basis then, using \eqref{eq:ea01} and \eqref{eq:ea02} and extending  by linearity, we must have
\begin{equation*}
v\cdot w=\sum_{k\in\Lambda}\biggl(\sum_{i\in\Lambda}v_iw_ic_{ki}\biggr) e_k.
\end{equation*}
However, the series can be non convergent in $\A$. To solve this problem we will work with \textit{separable} Hilbert spaces and appeal to a well-known result of Hilbert spaces theory. 
\begin{proposition}\label{prop:convergence of series}\cite[Theorem 8.3.1]{Heil}
Let $\A$ be a Hilbert space and let $\{e_i\}_{i\in\mathbb{N}}$ be an orthonormal subset. The series $\sum_{k=1}^{\infty}c_ke_k$ is convergent if, and only if, the numerical series $\sum_{k=1}^{\infty} |c_k|^2$ is convergent.
\end{proposition}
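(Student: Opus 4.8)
The plan is to prove the proposition via the Cauchy criterion, exploiting the completeness of $\A$ together with the Pythagorean identity for orthonormal systems. Write $s_n := \sum_{k=1}^{n} c_k e_k$ for the partial sums of the series in $\A$, and $\sigma_n := \sum_{k=1}^{n} |c_k|^2$ for the partial sums of the numerical series. Since $\A$ is a Hilbert space, hence complete, the series $\sum_{k=1}^{\infty} c_k e_k$ converges in $\A$ if and only if $(s_n)_{n\in\bbN}$ is a Cauchy sequence in $\A$.

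The key step is the computation, for $m > n$,
\begin{equation*}
\|s_m - s_n\|^2 = \Bigl\| \sum_{k=n+1}^{m} c_k e_k \Bigr\|^2 = \sum_{k=n+1}^{m} |c_k|^2 = \sigma_m - \sigma_n,
\end{equation*}
where the middle equality is the Pythagorean theorem: expanding $\bigl\langle \sum_{k} c_k e_k, \sum_{j} c_j e_j\bigr\rangle$ and using $\langle e_k, e_j\rangle = \delta_{kj}$ (which holds because $\{e_i\}_{i\in\bbN}$ is orthonormal) annihilates all cross terms. Since $m>n$ forces $\sigma_m - \sigma_n \geq 0$, we conclude that $(s_n)$ is Cauchy in $\A$ if and only if $(\sigma_n)$ is Cauchy in $\bbR$.

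Finally, $(\sigma_n)$ is a nondecreasing sequence of nonnegative real numbers, so it is Cauchy if and only if it is bounded above, if and only if it converges; that is, $(\sigma_n)$ is Cauchy precisely when $\sum_{k=1}^{\infty}|c_k|^2 < \infty$. Chaining the three equivalences gives the statement. I do not expect a genuine obstacle here: the argument is entirely elementary once the Pythagorean identity is in place, and the only ingredient one must not forget to invoke is the completeness of $\A$, which is exactly what converts the norm estimate for the partial sums into an actual convergence statement.
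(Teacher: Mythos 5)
Your argument is correct and complete; the paper itself gives no proof of this proposition, citing it directly from Heil's textbook, and your Cauchy-criterion-plus-Pythagoras argument is precisely the standard proof found there. Nothing is missing: the Pythagorean identity for finite orthonormal sums and the completeness of $\A$ are exactly the two ingredients needed.
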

Thus, given a separable Hilbert space $\A$, we want to define the product algebra for elements $v=\sum_{k=1}^{\infty}v_k e_k$ and $w=\sum_{k=1}^{\infty}w_k e_k$ satisfying %the following
\begin{equation}\label{eq:vw convergent}\displaystyle
\sum_{k=1}^{\infty}\biggl|\sum_{i=1}^{\infty}v_iw_ic_{ki}\biggr|^2 <\infty,
\end{equation}
for an orthonormal basis $\{e_k\}_{k\in\mathbb{N}}$. In this case, the product $\cdot:\A\times\A\longrightarrow \A$ can be defined in the basis $\{e_k\}_{k\in\mathbb{N}}$ and extended by linearity. 

The second issue to consider for a general definition of evolution algebra in a Hilbert space is the compatibility between the involved structures. Note that under the considerations described above we can introduce the left multiplication operator
\begin{equation} \label{eq:Lv}
\begin{array}{rrcl}
L_{v}:& \A & \longrightarrow&  \A \\
 & w & \longmapsto & L_{v}(w):= v\cdot w,
\end {array}
\end{equation}
for any $v\in\A$. So we shall require for the continuity of left multiplication operators whenever is possible to define the product algebra. After the previous considerations we are able to introduce our definition.
\begin{definition}\label{def:evolalg} Let $\A=(\A,\langle\cdot, \cdot\rangle)$ be a real or complex separable Hilbert space  which is provided with an algebra structure by the product $\cdot:\A\times\A\rightarrow \A$. We say that $\A:=(\A,\langle\cdot, \cdot\rangle,\cdot \,)$ is a separable Hilbert evolution algebra if it satisfies the following conditions:
\begin{enumerate}[label=(\roman*)]
\item There exists an orthonormal basis $\{e_i\}_{i\in\mathbb{N}}$ and scalars $\{c_{ki}\}_{i,k\in\mathbb{N}}$, such that 
\begin{equation}\label{eq:ea03}
e_i \cdot e_i =\displaystyle \sum_{k=1}^{\infty} c_{ki} e_k
\end{equation}
and
\begin{equation}\label{eq:ea04}
e_i \cdot e_j=0, \text{ if }i\neq j,
\end{equation} 
for any $i,j\in \mathbb{N}$. 
\smallskip
\item For any $v\in\A$, the left multiplications $L_v$ defined by \eqref{eq:Lv} are continuous in the metric topology induced by the inner product; i.e., there exists constants $M_v>0$ such that
\begin{equation}\label{eq:continuity condition}
\|L_v(w)\|\leq M_v \|w\|,\text{ for all } w\in \A.
\end{equation}
\end{enumerate}
\end{definition}
A basis satisfying condition (i) will be called {\it orthonormal natural basis}.  In the sequel we will work only with separable Hilbert spaces, so we omit the word separable and talk about Hilbert evolution algebras. As the evolution algebras in the sense of Definition \ref{def:evolalgTian}, the Hilbert evolution algebras are commutative and are, in general, non associative and without an unitary element. Also, it is not difficult to see that for any finite-dimensional evolution algebra it is possible to define a norm such that the algebra becomes an Hilbert  evolution algebra. For more details see \cite[Section 3.3]{tian}. Let us also point out that while checking \eqref{eq:continuity condition}, we are also checking that the product algebra is well defined; that is, if \eqref{eq:continuity condition} holds then $\|L_v(w)\|=\|v\cdot w\|<\infty$, which written explicitly in any orthonormal natural basis is equivalent to \eqref{eq:vw convergent}. 

Note that if $V$ is a finite-dimensional vector space and $\{e_i\}_{i\in\Lambda}$ is a basis for $V$,  then it is always possible to give an evolution algebra structure to $V$ by defining a product satisfying the equations \eqref{eq:ea01} and \eqref{eq:ea02}, for any finite subset of scalars $\{c_{ki}\}_{i,k\in\Lambda}$. However, as we prove in the next result, in the infinite-dimensional Hilbert case the sequence of scalars must satisfy an additional condition.

\begin{proposition} 
Every separable Hilbert space $\A$ admits a Hilbert evolution algebra structure. Moreover, if $\{e_i\}_{i\in\mathbb{N}}$ is an orthonormal basis, then every sequence $\{c_{ki}\}_{i,k\in\mathbb{N}}$ such that
\begin{equation}\label{eq:condition HEA01}
\sum_{k=1}^{\infty}\sum_{i=1}^{\infty}|v_i c_{ki}|^2 <\infty,
\end{equation}
for any $v=\sum_{i=1}^{\infty}v_i e_i\in \A$, defines a Hilbert evolution algebra structure in $\A$. 
\end{proposition}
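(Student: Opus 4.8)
The plan is to split the statement into its two assertions and handle the second one first, since the first follows from it. For the ``Moreover'' part, suppose $\{c_{ki}\}_{i,k\in\mathbb{N}}$ satisfies \eqref{eq:condition HEA01}. I would first use this to make sense of the product: given $v=\sum_i v_i e_i$ and $w=\sum_i w_i e_i$, I want to define $v\cdot w=\sum_k\bigl(\sum_i v_i w_i c_{ki}\bigr)e_k$ via \eqref{eq:ea03}--\eqref{eq:ea04} extended bilinearly. To see the inner series converge and the outer series define an element of $\A$, I would invoke Proposition \ref{prop:convergence of series}: it suffices to check $\sum_k \bigl|\sum_i v_i w_i c_{ki}\bigr|^2<\infty$, i.e. \eqref{eq:vw convergent}. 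I would estimate $\bigl|\sum_i v_i w_i c_{ki}\bigr| \le \sum_i |v_i||w_i||c_{ki}| \le \|w\|_\infty \sum_i |v_i||c_{ki}|$ where $\|w\|_\infty=\sup_i|w_i|<\infty$ since $(w_i)\in\ell^2$; then by Cauchy--Schwarz on the $i$-sum (splitting $|v_i c_{ki}|$ appropriately, or more simply bounding $\sum_i|v_i||c_{ki}|$ against a convergent quantity using that $(c_{ki})_i$ must itself be square-summable for each $k$ — which follows by testing \eqref{eq:condition HEA01} with a single basis vector $v=e_i$, giving $\sum_k|c_{ki}|^2<\infty$, and more globally applying \eqref{eq:condition HEA01} to the specific $v$ at hand) one obtains $\sum_k\bigl|\sum_i v_i w_i c_{ki}\bigr|^2 \le \|w\|_\infty^2 \sum_k\bigl(\sum_i|v_i||c_{ki}|\bigr)^2$, and this last quantity is controlled by \eqref{eq:condition HEA01} after one more Cauchy--Schwarz step. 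Hence the product is well defined and $\A$ becomes an algebra with $\{e_i\}$ as orthonormal natural basis, establishing condition (i) of Definition \ref{def:evolalg}.

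Next I would verify condition (ii): the continuity of $L_v$ for each fixed $v$. Since $\|L_v(w)\|^2=\|v\cdot w\|^2=\sum_k\bigl|\sum_i v_i w_i c_{ki}\bigr|^2$, the estimate from the previous step already gives $\|L_v(w)\|^2 \le \|w\|_\infty^2\, K_v \le \|w\|^2\, K_v$ where $K_v:=\sum_k\bigl(\sum_i|v_i||c_{ki}|\bigr)^2$ is a finite constant depending only on $v$ (finite precisely because of \eqref{eq:condition HEA01} together with Cauchy--Schwarz, bounding $\bigl(\sum_i|v_i||c_{ki}|\bigr)^2 \le \bigl(\sum_i a_i\bigr)\bigl(\sum_i a_i^{-1}v_i^2|c_{ki}|^2\bigr)$ for a suitable summable sequence $a_i$, or alternatively by a direct Cauchy--Schwarz using the square-summability in $i$). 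Taking $M_v:=\sqrt{K_v}$ yields \eqref{eq:continuity condition}. This proves the ``Moreover'' claim.

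Finally, for the first assertion — that every separable Hilbert space admits some Hilbert evolution algebra structure — I would simply exhibit one choice of structure constants satisfying \eqref{eq:condition HEA01}. The trivial choice $c_{ki}\equiv 0$ works and gives $e_i\cdot e_j=0$ for all $i,j$, so $\A$ with the zero product is a (degenerate) Hilbert evolution algebra; if a nontrivial example is preferred, one can take $c_{ki}=\delta_{ki}\,\lambda_i$ with $\sum_i\lambda_i^2<\infty$ bounded, i.e. $e_i\cdot e_i=\lambda_i e_i$, and check \eqref{eq:condition HEA01} directly: $\sum_k\sum_i|v_i c_{ki}|^2=\sum_i\lambda_i^2|v_i|^2\le(\sup_i\lambda_i^2)\|v\|^2<\infty$. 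Either way the existence follows.

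\textbf{Main obstacle.} The delicate point is the double-sum estimate: one must pass from the hypothesis \eqref{eq:condition HEA01}, which is a statement about $\sum_k\sum_i|v_ic_{ki}|^2$, to a bound on $\sum_k\bigl(\sum_i|v_i||w_i||c_{ki}|\bigr)^2$, where the inner sum is not squared termwise. The bridge is a careful application of Cauchy--Schwarz in the $i$ variable (and exploiting that $(w_i)\in\ell^2$ is in particular bounded), but one has to be slightly careful about which quantity absorbs which, and to confirm that the resulting constant $K_v$ is genuinely finite and independent of $w$; this is where essentially all the analytic content of the proposition sits.
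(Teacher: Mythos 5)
There is a genuine gap in the central estimate. After factoring out $\|w\|_\infty$ you are left needing $K_v:=\sum_{k}\bigl(\sum_i |v_i|\,|c_{ki}|\bigr)^2<\infty$, and this simply does not follow from \eqref{eq:condition HEA01}: take $c_{1i}=1$ for all $i$ and $c_{ki}=0$ for $k\geq 2$. Then $\sum_k\sum_i|v_ic_{ki}|^2=\|v\|^2<\infty$ for every $v$, so the hypothesis holds, yet $K_v=\bigl(\sum_i|v_i|\bigr)^2=\infty$ whenever $v\in\ell^2\setminus\ell^1$. The repairs you sketch do not close this. Testing \eqref{eq:condition HEA01} with $v=e_i$ gives square-summability of the \emph{columns} $(c_{ki})_k$, not of the rows $(c_{ki})_i$, and in the example above the rows are not square-summable at all. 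The weighted Cauchy--Schwarz $\bigl(\sum_i|v_i||c_{ki}|\bigr)^2\le\bigl(\sum_i a_i\bigr)\bigl(\sum_i a_i^{-1}|v_i|^2|c_{ki}|^2\bigr)$ also fails: to feed the right-hand factor back into \eqref{eq:condition HEA01} you would need $\sum_i|v_i|^2/a_i<\infty$ with $\sum_i a_i<\infty$, and by Cauchy--Schwarz that forces $\sum_i|v_i|<\infty$, i.e.\ it only works for $v\in\ell^1$. So the route through the triangle inequality and $\|w\|_\infty$ demands absolute convergence of $\sum_i|v_i||c_{ki}|$, which is strictly stronger than what the hypothesis provides, and the bound $\|L_v(w)\|^2\le\|w\|_\infty^2K_v$ is vacuous in general.

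The fix --- and what the paper actually does --- is to apply Cauchy--Schwarz \emph{once}, directly to the pairing of $(w_i)_i$ against $(v_ic_{ki})_i$, without ever separating $|v_i|$ from $|c_{ki}|$ or replacing $|w_i|$ by its supremum:
\begin{equation*}
\biggl|\sum_{i=1}^{\infty}w_i\,(v_ic_{ki})\biggr|^2\le\Bigl(\sum_{i=1}^{\infty}|w_i|^2\Bigr)\Bigl(\sum_{i=1}^{\infty}|v_ic_{ki}|^2\Bigr)=\|w\|^2\sum_{i=1}^{\infty}|v_ic_{ki}|^2,
\end{equation*}
and summing over $k$ gives $\|L_v(w)\|^2\le\|w\|^2\sum_k\sum_i|v_ic_{ki}|^2=M_v^2\|w\|^2$ with $M_v^2$ exactly the quantity assumed finite in \eqref{eq:condition HEA01}. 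This single step yields \eqref{eq:vw convergent}, the well-definedness of the product, and \eqref{eq:continuity condition} simultaneously. Your treatment of the existence part (zero or diagonal structure constants, or more generally $\sup_i\sum_k|c_{ki}|^2<\infty$) is fine and matches the paper; the problem is confined to the ``Moreover'' estimate, which is where, as you yourself note, all the analytic content sits.
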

\begin{proof}Let $v=\sum_{i=1}^{\infty}v_i e_i$ and note that always exist numbers satisfying \eqref{eq:condition HEA01}. Indeed, it is  possible to choose numbers $\{c_{ki}\}_{i,k\in\mathbb{N}}$ such that 
\begin{equation*}
\sup\biggl\{\sum_{k=1}^{\infty}|c_{ki}|^2 \,:\, i\in\mathbb{N} \biggr\}<\infty,
\end{equation*}from which \eqref{eq:condition HEA01} follows immediately, because $\|v\|^2=\sum_{i=1}^{\infty}|v_i|^2 <+\infty$, since $v\in\A$. Now, let $w=\sum_{i=1}^{\infty}w_i e_i$, and consider the formal series
\begin{equation}\label{eq:Lv series}
L_v (w)=\sum_{k=1}^{\infty}\biggl(\sum_{i=1}^{\infty}v_iw_ic_{ki}\biggr) e_k.
\end{equation}
Thus, to prove the proposition we must analyze the convergence of 
\begin{equation*}
\|L_v(w)\|^2=\sum_{k=1}^{\infty}\biggl|\sum_{i=1}^{\infty}v_iw_ic_{ki}\biggr|^2.   
\end{equation*}
By equation \eqref{eq:condition HEA01} we have that $\sum_{i=1}^{\infty}|v_i c_{ki}|^2 <\infty$, for any $k\in\mathbb{N}$.
On the other hand $w\in\A$ implies that $\|w\|^2=\sum_{i=1}^{\infty}|w_i|^2<\infty$. 
Then we can use the Cauchy-Schwartz inequality to obtain
\begin{equation*}
\biggl|\sum_{i=1}^{\infty}w_iv_ic_{ki}\biggr|^2\leq \left(\sum_{i=1}^{\infty}|w_i|^2\right)\left(\sum_{i=1}^{\infty}|v_ic_{ki}|^2\right)=\|w\|^2\sum_{i=1}^{\infty}|v_ic_{ki}|^2,       
\end{equation*}
for every $k\in\mathbb{N}$. Hence
\begin{equation*}
\sum_{k=1}^{\infty}\biggl|\sum_{i=1}^{\infty}w_iv_ic_{ki}\biggr|^2   \leq\|w\|^2\sum_{k=1}^{\infty}\sum_{i=1}^{\infty}|v_ic_{ki}|^2= M_v^2 \|w\|^2,  
\end{equation*}
where by hypothesis we can define 
\begin{equation*}
M_v:=\biggl(\sum_{k=1}^{\infty}\sum_{i=1}^{\infty}|v_i c_{ki}|^2\biggr)^{1/2}.
\end{equation*}
It follows that $\|L_v(w)\|\leq M_v\|w\|$, for any $w\in \A$; i.e., the operators $L_v$ are well defined and are continuous for $v\in \A$. That is, if we define the product in the basis $\{e_i\}_{i\in\mathbb{N}}$ by the equations \eqref{eq:ea03} and  \eqref{eq:ea04} then, it is possible to extend the product by linearity to $v\cdot w$ for all $v,w\in\A$ using the equation \eqref{eq:Lv series}, and in this framework the operators $L_v$ are continuous. Therefore we have an Hilbert evolution algebra structure defined in $\A$.
\end{proof}

Based on the previous proof we see that there is an important special case to guarantee the existence of Hilbert evolution algebras.
\begin{corollary}\label{corol:manageable condition}
Let $\A$ be a separable Hilbert space. Consider an orthonormal basis $\{e_i\}_{i\in\mathbb{N}}$ and suppose that the sequences $\{c_{ki}\}_{i,k\in\mathbb{N}}$ satisfy
\begin{equation}\label{eq:condition HEA02}
K:=\sup\biggl\{\sum_{k=1}^{\infty}|c_{ki}|^2 \,:\, i\in\mathbb{N} \biggr\}<\infty.
\end{equation}
Then $\A$ admits an Hilbert evolution algebra structure where the $c_{ki}$ are the structure constants.
\end{corollary}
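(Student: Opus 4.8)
The plan is to obtain this as an immediate consequence of the previous proposition, by checking that the uniform bound \eqref{eq:condition HEA02} forces condition \eqref{eq:condition HEA01} to hold for every element of $\A$. So I would fix an arbitrary $v=\sum_{i=1}^{\infty}v_ie_i\in\A$. Since $\{e_i\}_{i\in\mathbb{N}}$ is an orthonormal basis, $\|v\|^2=\sum_{i=1}^{\infty}|v_i|^2<\infty$, and for each fixed $i$ the inner sum $\sum_{k=1}^{\infty}|c_{ki}|^2$ is finite and bounded above by $K$, by hypothesis.

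The key step is then a routine interchange of the order of summation. All the terms $|v_ic_{ki}|^2$ are nonnegative, so Tonelli's theorem for double series applies and gives
\begin{equation*}
\sum_{k=1}^{\infty}\sum_{i=1}^{\infty}|v_ic_{ki}|^2=\sum_{i=1}^{\infty}|v_i|^2\Biggl(\sum_{k=1}^{\infty}|c_{ki}|^2\Biggr)\leq K\sum_{i=1}^{\infty}|v_i|^2=K\|v\|^2<\infty.
\end{equation*}
Hence \eqref{eq:condition HEA01} holds for every $v\in\A$, and the previous proposition yields a Hilbert evolution algebra structure on $\A$ in which the $c_{ki}$ are precisely the structure constants: the product is defined on the basis by \eqref{eq:ea03} and \eqref{eq:ea04} and extended by linearity, the left multiplications $L_v$ are continuous, and in fact one reads off the uniform estimate $M_v\leq\sqrt{K}\,\|v\|$ from the bound above.

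There is no real obstacle here. The only points requiring a word of justification are the legitimacy of swapping the two summations, which is guaranteed purely by nonnegativity of the summands, and the remark that $K<\infty$ controls the inner sums uniformly in $i$; everything else is inherited verbatim from the preceding proposition.
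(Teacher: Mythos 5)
Your argument is correct and is exactly the paper's route: the paper's proof is the one-line remark that \eqref{eq:condition HEA02} implies \eqref{eq:condition HEA01}, which is precisely the Tonelli/interchange-of-summation computation you spell out (and which is already sketched inside the proof of the preceding proposition). Your added detail, including the explicit estimate $M_v\leq\sqrt{K}\,\|v\|$, is a faithful elaboration rather than a different approach.
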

\begin{proof}
Just note that \eqref{eq:condition HEA02} implies \eqref{eq:condition HEA01}.
\end{proof}

In analogy to the theory of \cite[Section 3.2]{tian} in the finite dimensional case, the next step is to introduce the  evolution operator induced by the Hilbert evolution algebra. We define the evolution operator as the linear operator $C:D(C)\longrightarrow \A$ given by its values in a natural orthonormal basis,
\begin{equation*}
C(e_i):=e_i^2=\sum_{k=1}^{\infty} c_{ki}e_k ,
\end{equation*}
and with domain $D(C)\subset \A$ defined as
\begin{equation}\label{eq:domain of C}
D(C):=\Bigg\{v=\sum_{i=1}^{\infty} v_ie_i\in\A :\, 
\sum_{k=1}^{\infty} \biggl|\sum_{i=1}^{\infty} v_ic_{ki}\biggr|^2<\infty \Bigg\}
\end{equation}
It is worth noting that $D(C)$ is a vector space. This follows from the Minkowski’s inequality, which implies
\begin{equation*}
\left(\sum_{k=1}^{\infty} \biggl|\sum_{i=1}^{\infty} (\alpha v_i+w_i)c_{ki}\biggr|^2\right)^{1/2}     
\leq \left(\sum_{k=1}^{\infty} \biggl|\sum_{i=1}^{\infty}\alpha v_ic_{ki}\biggr|^2\right)^{1/2} +\left(\sum_{k=1}^{\infty}\biggl|\sum_{i=1}^{\infty}w_ic_{ki}\biggr|^2\right)^{1/2}<\infty,
\end{equation*}
for $\alpha\in\mathbb{R}$, $v=\sum_{i=1}^{\infty} v_i e_i, w=\sum_{i=1}^{\infty} w_i e_i\in D(C)$.
With this we can write
\begin{equation}\label{eq:EOp00}
	C(v):=\sum_{k=1}^{\infty} \biggl(\sum_{i=1}^{\infty} v_ic_{ki}\biggr)e_k,
\end{equation}
for any $v=\sum_{i=1}^{\infty} v_i e_i \in D(C)$.
In the general case the operator $C$ will be unbounded, thus it is important to find conditions on the structure constants to know when $C$ is closable or closed. 
This matter is left for future work.
Next we show some cases when $C$ is bounded.
	
\begin{proposition}\label{prop:EOcont}
Let $\A$ be a Hilbert evolution algebra with structure constants satisfying one of the following conditions:
\begin{enumerate}[label=(\roman*)]
    \item \label{item:condition continuity C}
    $\displaystyle\sum_{k=1}^{\infty}\sum_{i=1}^{\infty}|c_{ki}|^2<\infty.$ \vspace{0.2cm}
   	
    \item There exists $\alpha_k, \beta_i>0$, $i,k\in \mathbb{N}$ and $M_1, M_2>0$ such that
    \begin{equation}\label{eq:schur test}
    \begin{array}{l}
    \displaystyle\sum_{k=1}^{\infty}|c_{ki}|\alpha_k\leq M_1\beta_i, \quad\text{ for all } i\in \mathbb{N}, \\[1.1ex]
    \displaystyle\sum_{i=1}^{\infty}|c_{ki}|\beta_i\leq M_2\alpha_k,
    \quad\text{ for all } k\in \mathbb{N}.
    \end{array}
    \end{equation}
\end{enumerate}
Then $D(C)=\A$ and the evolution operator $C:\A\longrightarrow \A$ is bounded with
$\|C\|\leq (M_1M_2)^{1/2}.$
\end{proposition}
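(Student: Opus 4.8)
The plan is to treat the two hypotheses separately; in each case I will show that for every $v=\sum_{i=1}^{\infty}v_ie_i\in\A$ the numerical series $\sum_{k=1}^{\infty}\bigl|\sum_{i=1}^{\infty}v_ic_{ki}\bigr|^{2}$ converges and is bounded above by a fixed constant times $\|v\|^{2}$. By the description \eqref{eq:domain of C} of the domain this gives $D(C)=\A$; by Proposition \ref{prop:convergence of series} the series \eqref{eq:EOp00} defining $C(v)$ then converges in $\A$; and the square root of the constant obtained will be exactly the claimed bound on $\|C\|$.

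Under condition \ref{item:condition continuity C} I would argue by a single application of the Cauchy--Schwarz inequality in the index $i$. For each fixed $k$,
\[
\Bigl|\sum_{i=1}^{\infty}v_ic_{ki}\Bigr|^{2}\le\Bigl(\sum_{i=1}^{\infty}|v_i|^{2}\Bigr)\Bigl(\sum_{i=1}^{\infty}|c_{ki}|^{2}\Bigr)=\|v\|^{2}\sum_{i=1}^{\infty}|c_{ki}|^{2},
\]
and summing over $k$ gives $\sum_{k=1}^{\infty}\bigl|\sum_{i=1}^{\infty}v_ic_{ki}\bigr|^{2}\le\|v\|^{2}\sum_{k=1}^{\infty}\sum_{i=1}^{\infty}|c_{ki}|^{2}<\infty$. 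Hence $v\in D(C)$, the series \eqref{eq:EOp00} converges by Proposition \ref{prop:convergence of series}, and $\|C(v)\|\le\bigl(\sum_{k}\sum_{i}|c_{ki}|^{2}\bigr)^{1/2}\|v\|$, so $C$ is bounded (in fact Hilbert--Schmidt).

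Under condition (ii) I would run the classical Schur test against the weights $\alpha_k,\beta_i$. Splitting $|v_i||c_{ki}|=(|c_{ki}|\beta_i)^{1/2}\bigl((|c_{ki}|\beta_i^{-1})^{1/2}|v_i|\bigr)$ (which is legitimate since $\beta_i>0$) and applying Cauchy--Schwarz in $i$, then using the second line of \eqref{eq:schur test},
\[
\Bigl|\sum_{i=1}^{\infty}v_ic_{ki}\Bigr|^{2}\le\Bigl(\sum_{i=1}^{\infty}|c_{ki}|\beta_i\Bigr)\Bigl(\sum_{i=1}^{\infty}|c_{ki}|\beta_i^{-1}|v_i|^{2}\Bigr)\le M_2\alpha_k\sum_{i=1}^{\infty}|c_{ki}|\beta_i^{-1}|v_i|^{2}.
\]
Summing over $k$, interchanging the two summations (valid by Tonelli's theorem, as all terms are non-negative), and then using the first line of \eqref{eq:schur test},
\[
\sum_{k=1}^{\infty}\Bigl|\sum_{i=1}^{\infty}v_ic_{ki}\Bigr|^{2}\le M_2\sum_{i=1}^{\infty}\beta_i^{-1}|v_i|^{2}\sum_{k=1}^{\infty}|c_{ki}|\alpha_k\le M_1M_2\sum_{i=1}^{\infty}|v_i|^{2}=M_1M_2\|v\|^{2}.
\]
As in the first case this shows $v\in D(C)$, convergence of \eqref{eq:EOp00} in $\A$, and $\|C(v)\|\le(M_1M_2)^{1/2}\|v\|$, whence $\|C\|\le(M_1M_2)^{1/2}$.

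The only delicate points are the double use of Cauchy--Schwarz in the Schur-test step and the justification of swapping the order of the two summations; both are routine once one notes that every quantity appearing is non-negative, so that Tonelli applies, and that the positivity of the $\beta_i$ makes the weights $\beta_i^{\pm1/2}$ meaningful. I therefore expect the ``hard part'' to be merely bookkeeping — keeping track of which inequality in \eqref{eq:schur test} is used at which stage so that the constants $M_1$ and $M_2$ combine into $M_1M_2$.
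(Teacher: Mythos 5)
Your proposal is correct and follows essentially the same route as the paper: Cauchy--Schwarz in $i$ for condition (i), and the standard Schur-test splitting $|v_i||c_{ki}|=(|c_{ki}|\beta_i)^{1/2}\bigl((|c_{ki}|/\beta_i)^{1/2}|v_i|\bigr)$ followed by the two inequalities of \eqref{eq:schur test} for condition (ii). Your explicit appeal to Tonelli for the interchange of summations is a point the paper leaves implicit, but the argument is otherwise identical.
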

\begin{proof}
Suppose the first condition. Let $v=\sum_{i=1}^{\infty} v_i e_i\in\A$ and note that 
\begin{equation*}
\|C(v)\|^2
=\sum_{k=1}^{\infty} \biggl|\sum_{i=1}^{\infty} v_ic_{ki}\biggr|^2  
\leq \sum_{k=1}^{\infty}\Bigg (\sum_{i=1}^{\infty} |v_i|^2\Bigg)
\Bigg(\sum_{i=1}^{\infty} |c_{ki}|^2\Bigg) 
=\|v\|^2\sum_{k=1}^{\infty}\sum_{i=1}^{\infty} |c_{ki}|^2
<\infty,
\end{equation*}
where we use the Cauchy-Schwartz inequality.
Therefore $D(C)=\A$ and the linear operator is bounded in this case. Now let us assume that the second condition \eqref{eq:schur test} is satisfied. By a similar argument used to prove the Schur Test \cite[Section 45]{Halmos}, we have that 
\begin{equation*}
\begin{array}{rl}
\|C(v)\|^2 
&\displaystyle=\sum_{k=1}^{\infty} \biggl|\sum_{i=1}^{\infty} v_ic_{ki}\biggr|^2 \\
&\displaystyle\leq \sum_{k=1}^{\infty} \biggl|\sum_{i=1}^{\infty} |v_i| |c_{ki}|\biggr|^2 \\
&\displaystyle=\sum_{k=1}^{\infty} \biggl|\sum_{i=1}^{\infty}  \left(\sqrt{|c_{ki}|}\sqrt {\beta_i}\right) \left(\frac{\sqrt{|c_{ki}|}|v_i|}{\sqrt {\beta_i}}\right)\biggr|^2 \\
&\displaystyle\leq \sum_{k=1}^{\infty} \left(\sum_{i=1}^{\infty} |c_{ki}| \beta_i\right) \left(\sum_{i=1}^{\infty} \frac{|c_{ki}| |v_i|^2}{\beta_i} \right) \\
& \displaystyle \leq\sum_{k=1}^{\infty} M_2\alpha_k \left(\sum_{i=1}^{\infty} \frac{|c_{ki}||v_i|^2}{\beta_i}\right) \\
&\displaystyle =M_2 \sum_{i=1}^{\infty}  \frac{|v_i|^2}{\beta_i}\left(\sum_{k=1}^{\infty}|c_{ki}|\alpha_k\right)\\
&\displaystyle \leq M_1M_2 \sum_{i=1}^{\infty}  |v_i|^2
=M_1M_2 \|v\|^2.
\end{array}
\end{equation*}
That is, $D(C)=\A$ and $C$ is a bounded linear operator, with $\|C\|\leq (M_1 M_2)^{1/2}$. 
\end{proof}

% The second condition of the Theorem is difficult to check, but we have a special case which is simpler to work.
% \begin{corollary}\label{corol:EOcont}
% Suppose that there exists $M_1,M_2>0$ such that
% \begin{equation}\label{eq:conditions with M1 M2}
% \begin{array}{l}
% \displaystyle\sum_{k=1}^{\infty}|c_{ki}|\leq M_1 \quad\text{ for all } i\in V, \\[1.1ex]
% \displaystyle\sum_{i=1}^{\infty}|c_{ki}|\leq M_2 \quad\text{ for all } k\in V.
% \end{array}
% \end{equation}
% Then $D(C)=A$ and the evolution operator $C:\A\longrightarrow \A$ is a bounded linear operator with
% \begin{equation}
% \|C\|\leq (M_1 M_2)^{1/2}.
% \end{equation}
% \end{corollary}
% \begin{proof}
% In the second condition of Theorem \ref{prop:EOcont} take $\alpha_k=\beta_k=1$ for all $k$.
% \end{proof}

\section{The connection with discrete-time Markov chains}

In order to illustrate the applicability of Definition \ref{def:evolalg} we recover our motivation, the connection with discrete-time Markov chains. In what follows, the structure constants will be interpreted as probabilities so we assume $\mathbb{K}=\mathbb{R}$.

\begin{theorem}\label{theo:mc}
Let $\{X_n\}_{n\geq 0}$ be an homogeneous discrete-time Markov chain with state space $\mathcal{X}=\{x_i\}_{i\in \mathbb{N}}$ and transition probabilities given by $\nonumber p_{ik},$ for $i,k\in \mathbb{N}$. If  $\mathcal{A}_{\mathcal{X}}$ is a separable Hilbert space with an orthonormal basis $\{e_i\}_{i\in\mathbb{N}}$, then the structural constants $\{c_{ki}\}_{i,k\in\mathbb{N}}$, such that $c_{ki}:=p_{ik}$ for any $i,k\in\mathbb{N}$, define a Hilbert evolution algebra structure in $\mathcal{A}_{\mathcal{X}}$, called a Markov Hilbert evolution algebra. Moreover, suppose there exists $\alpha_k, \beta_i>0$, $i,k\in \mathbb{N}$ and $M_1, M_2>0$ such that
\begin{equation}\label{eq:schur test for Markov}
\begin{array}{l}
\displaystyle\sum_{k=1}^{\infty}p_{ik}\alpha_k\leq M_1\beta_i, \quad\text{ for all } i\in \mathbb{N}, \\[1.1ex]
\displaystyle\sum_{i=1}^{\infty}p_{ik}\beta_i\leq M_2\alpha_k,
\quad\text{ for all } k\in \mathbb{N}.
\end{array}
\end{equation}
Then the evolution operator $C:\mathcal{A}_{\mathcal{X}}\longrightarrow \mathcal{A}_{\mathcal{X}}$ is a bounded linear operator
and satisfies
\begin{equation}\label{eq:EOproof}
C^n(e_i)=\sum_{k=1}^{\infty} p_{ik}^{(n)} e_k,     
\end{equation}
where
$$p_{ik}^{(n)}:=\mathbb{P}(X_n=x_k | X_0 = x_i).$$
\end{theorem}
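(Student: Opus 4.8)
The plan is to establish the three assertions of the theorem in order: first that the assignment $c_{ki}:=p_{ik}$ yields a well-defined Hilbert evolution algebra structure, then that under the Schur-type condition \eqref{eq:schur test for Markov} the evolution operator $C$ is bounded on all of $\mathcal{A}_{\mathcal{X}}$, and finally that its $n$-th power acts on the natural basis via the $n$-step transition probabilities. For the first part, I would invoke Corollary \ref{corol:manageable condition}: since the rows of the transition matrix sum to one, for each fixed $i$ we have $\sum_{k=1}^{\infty}|c_{ki}|^2=\sum_{k=1}^{\infty}p_{ik}^2\leq\left(\sum_{k=1}^{\infty}p_{ik}\right)^2=1$, so the supremum $K$ in \eqref{eq:condition HEA02} is at most $1$; hence $\A_{\mathcal{X}}$ carries a Hilbert evolution algebra structure with the $p_{ik}$ as structure constants. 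The boundedness of $C$ under \eqref{eq:schur test for Markov} is immediate from Proposition \ref{prop:EOcont}\,(ii): the hypothesis \eqref{eq:schur test for Markov} is literally \eqref{eq:schur test} with $c_{ki}=p_{ik}\geq 0$, so $D(C)=\A_{\mathcal{X}}$ and $\|C\|\leq(M_1M_2)^{1/2}$.

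The substantive part is the identity \eqref{eq:EOproof}, which I would prove by induction on $n$. The base case $n=1$ is just the definition of the evolution operator together with $c_{ki}=p_{ik}$ and the convention $p_{ik}^{(1)}=p_{ik}$. For the inductive step, assume $C^n(e_i)=\sum_{k=1}^{\infty}p_{ik}^{(n)}e_k$. Since $C$ is bounded and linear, and the series $\sum_k p_{ik}^{(n)}e_k$ converges in $\A_{\mathcal{X}}$ (its coefficients satisfy $\sum_k (p_{ik}^{(n)})^2\leq(\sum_k p_{ik}^{(n)})^2\leq 1$, so Proposition \ref{prop:convergence of series} applies), continuity lets me pass $C$ through the sum: $C^{n+1}(e_i)=C\left(\sum_{k=1}^{\infty}p_{ik}^{(n)}e_k\right)=\sum_{k=1}^{\infty}p_{ik}^{(n)}C(e_k)=\sum_{k=1}^{\infty}p_{ik}^{(n)}\sum_{j=1}^{\infty}p_{kj}e_j$. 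Rearranging the double series and collecting the coefficient of $e_j$ gives $\sum_{k=1}^{\infty}p_{ik}^{(n)}p_{kj}$, which is exactly $p_{ij}^{(n+1)}$ by the Chapman–Kolmogorov equations. This completes the induction.

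The main obstacle is the interchange of summation order in the double series $\sum_{k}p_{ik}^{(n)}\sum_{j}p_{kj}e_j$ — one must justify writing it as $\sum_{j}\left(\sum_{k}p_{ik}^{(n)}p_{kj}\right)e_j$ and confirm that the resulting series converges in the Hilbert space norm so that it legitimately represents $C^{n+1}(e_i)$. The convergence of the rearranged series is clean: the coefficient sequence $\left(p_{ij}^{(n+1)}\right)_j$ is a sub-probability vector, hence $\ell^2$, so Proposition \ref{prop:convergence of series} guarantees convergence in $\A_{\mathcal{X}}$. The rearrangement itself is justified because all terms $p_{ik}^{(n)}p_{kj}$ are nonnegative, so one may apply Tonelli's theorem (or Fubini for nonnegative double series) to the scalar double series of coefficients; combined with the boundedness of $C$, which controls the tails of the partial sums in norm, this licenses moving $C$ inside the sum and then reordering. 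I would state this reordering step carefully rather than treat it as routine, since it is the one place where the infinite-dimensional setting genuinely requires the continuity of $C$ established in the first half of the proof.
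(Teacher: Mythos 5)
Your proof is correct and follows essentially the same route as the paper's: Corollary \ref{corol:manageable condition} for the algebra structure (the paper uses $\sum_k p_{ik}^2\leq\sum_k p_{ik}=1$, you use $\sum_k p_{ik}^2\leq(\sum_k p_{ik})^2$, both valid), Proposition \ref{prop:EOcont}\,(ii) for boundedness, and induction via the Chapman--Kolmogorov equations for \eqref{eq:EOproof}. The only difference is that you explicitly justify passing $C$ through the infinite sum and reordering the nonnegative double series, steps the paper leaves implicit; this is a welcome refinement rather than a different argument.
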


\begin{proof}
 The first part of the proof is a direct consequence of Corollary \ref{corol:manageable condition}. Indeed, consider a separable Hilbert space $\mathcal{A}_{\mathcal{X}}$ for which we can identify an orthonormal basis $\{e_i\}_{i\in\mathbb{N}}$. Then, if we consider the constants $\{c_{ik}\}_{i,k\in\mathbb{N}}$ as the transition probabilities; i.e., $c_{ki}=p_{ik}$, then condition \eqref{eq:condition HEA02} holds because for any $i\in \mathbb{N}$ we have $\sum_{k=1}^{\infty}|c_{ki}|^2 \leq \sum_{k=1}^{\infty}c_{ki} =1.$

Let $C:D(C)\longrightarrow \A_{\mathcal{X}}$ the operator  defined by \eqref{eq:EOp00}. Note that the conditions \eqref{eq:schur test for Markov} are just \eqref{eq:schur test} written for $p_{ik}=c_{ki}$. Thus, we have $D(C)=\A_{\mathcal{X}}$, the continuity of $C$ and $\|C\|\leq M^{1/2}$. Now, \eqref{eq:EOproof} can be proved by induction in $n$, by noting that 
\begin{equation}\label{eq:EO1}
C\left(C^{n-1}\left(e_i\right)\right)=C\left(\sum_{k=1}^{\infty} p_{ik}^{(n-1)} e_k\right)=\sum_{j=1}^{\infty}\left(\sum_{k=1}^{\infty} p^{(n-1)}_{ik} c_{jk}\right)e_j=\sum_{k=1}^{n}p^{(n)}_{ij}e_j,
\end{equation}
where, since $p^{(n-1)}_{ik} c_{jk}=p^{(n-1)}_{ik}p_{kj}$, the last equality of \eqref{eq:EO1} is due to the Chapman-Kolmogorov Equations (see for example \cite[Section 4.2]{ross}), which guarantee that
$$p_{ij}^{(n_1+n_2)}=\sum_{k=1}^{\infty} p^{(n_1)}_{ik} p^{(n_2)}_{kj},$$
for any $i,j,n_1,n_2\in\mathbb{N}$.
\end{proof}

Note that the  previous theorem ensures that each Markov chain with state space $\mathcal{X}=\{x_i\}_{i\in \mathbb{N}}$ induces a Markov Hilbert evolution algebra on every Hilbert space $\mathcal{A}_{\mathcal{X}}$ associated to it. On the other hand, the condition \eqref{eq:schur test for Markov} for the continuity of evolution operator is difficult to check, thus we present a particular case, which is simpler to verify.

%But the conditions in  \eqref{eq:schur test for Markov} (for the continuity of evolution operator) are difficult to check. Thus, in order to construct examples, we consider a particular case.

\begin{corollary}
Let $\mathcal{A}_{\mathcal{X}}$ be a Markov Hilbert evolution algebra and $M>0$ such that  
\begin{equation}\label{eq:condition on pik}
\displaystyle\sum_{i=1}^{\infty}p_{ik}\leq M \quad\text{ for all } k\in \mathbb{N},
\end{equation}
then the evolution operator $C:\mathcal{A}_{\mathcal{X}}\longrightarrow \mathcal{A}_{\mathcal{X}}$ is  bounded 
and satisfies \eqref{eq:EOproof}.
\end{corollary}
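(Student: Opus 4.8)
The plan is to deduce this corollary from Theorem~\ref{theo:mc} by exhibiting an explicit choice of the weights $\alpha_k,\beta_i$ that reduces the two-sided Schur condition \eqref{eq:schur test for Markov} to the single inequality \eqref{eq:condition on pik}. The natural guess is to take $\alpha_k=\beta_i=1$ for all $i,k\in\mathbb{N}$. With this choice the first line of \eqref{eq:schur test for Markov} becomes $\sum_{k=1}^{\infty}p_{ik}\le M_1$, which holds with $M_1=1$ because the transition probabilities out of any fixed state $x_i$ sum to (at most) $1$; this is immediate and needs no hypothesis. The second line of \eqref{eq:schur test for Markov} becomes exactly $\sum_{i=1}^{\infty}p_{ik}\le M_2$, which is precisely the assumed condition \eqref{eq:condition on pik} with $M_2=M$.

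So the proof is essentially one paragraph: first I would recall that $\mathcal{A}_{\mathcal{X}}$ being a Markov Hilbert evolution algebra already gives the structure constants $c_{ki}=p_{ik}$ with $\sum_{k}|c_{ki}|^2\le\sum_k c_{ki}=1$, so the evolution operator $C$ and its domain are defined as in \eqref{eq:domain of C}--\eqref{eq:EOp00}. Then I would set $\alpha_k=\beta_i=1$, verify that the first inequality of \eqref{eq:schur test for Markov} holds with $M_1=1$ (using $\sum_k p_{ik}\le 1$) and that the second holds with $M_2=M$ (this is \eqref{eq:condition on pik}). By Theorem~\ref{theo:mc}, these hypotheses imply $D(C)=\mathcal{A}_{\mathcal{X}}$, that $C$ is bounded with $\|C\|\le (M_1M_2)^{1/2}=M^{1/2}$, and that \eqref{eq:EOproof} holds. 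That finishes the argument.

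There is no real obstacle here: the corollary is a convenient specialization obtained by a symmetric choice of test weights. The only thing to be slightly careful about is the direction in which the indices appear in \eqref{eq:schur test for Markov} versus \eqref{eq:condition on pik}: the ``difficult'' column-sum condition $\sum_i p_{ik}\le M$ is the one that is genuinely an extra assumption, while the ``row-sum'' condition is automatic from stochasticity, so I would make sure to match $\sum_{i=1}^{\infty}|c_{ki}|\beta_i=\sum_{i=1}^{\infty}p_{ik}$ with \eqref{eq:condition on pik} and not the other way around. One could also remark, as the statement already hints, that this is why \eqref{eq:condition on pik} is a more ``manageable'' hypothesis than the full Schur test: only the incoming transition probabilities need to be controlled uniformly, since the outgoing ones are controlled for free.
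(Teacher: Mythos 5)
Your proof is correct and follows essentially the same route as the paper: choose $\alpha_k=\beta_i=1$, get the row-sum inequality for free from stochasticity, and read off the column-sum inequality from \eqref{eq:condition on pik} before invoking Theorem~\ref{theo:mc}. If anything, your bookkeeping is slightly more careful than the paper's, which writes $M_1=M_2=1$ where the second constant should really be $M_2=M$, giving $\|C\|\leq M^{1/2}$ as you note.
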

\begin{proof}
We want to check that the equations \eqref{eq:schur test for Markov} are satisfied. Note that, since $\sum_{k=1}^{\infty}p_{ik} =1$, condition \eqref{eq:condition on pik}, allow us to use $\alpha_i=\beta_k=1$ for all $i,k\in\mathbb{N}$ and $M_1=M_2=1$. Thus, we can apply Theorem \ref{theo:mc}.
\end{proof}

\begin{example}
Let us consider the Example \ref{ex:mc} again. Let $\mathcal{X}=\{x_i\}_{i\in \mathbb{N}\cup\{0\}}$ be the state space and let the transition probabilities given by $p_{0i}=p_i>0$, for any $i\in\mathbb{N}$, where $\sum_{i=1}^{\infty}p_i=1$, and $p_{i (i-1)}=1$ for any $i\in\mathbb{N}$. By the Theorem \ref{theo:mc}, we have an induced Markov Hilbert evolution algebra $\A_{\mathcal{X}}$  by taking $c_{ki}=p_{ik}$, for $i,k\in \mathcal{X}$. Moreover, note that 
\begin{equation*}
\sum_{i=1}^{\infty} p_{i0}=p_{1 0} =1,
\end{equation*}
and, for any $k\in\mathbb{N}$, we have
\begin{equation*}
\sum_{i=1}^{\infty} p_{ik}=p_{(k+1) k} + p_{0 k}=1+p_{0 k}\leq 2.
\end{equation*}
%for all $k\in\mathbb{N}$. 
Hence the equation \eqref{eq:condition on pik} is satisfied, implying that the evolution operator $C$ is bounded with $D(C)=\A_{\mathcal{X}}$ and $\|C\|\leq 2$.
\end{example}

\begin{example}
Consider a branching process with offspring distribution given by $(p_i)_{i\geq 0}$, with $p_0\in (0,1)$. That is, consider the discrete-time Markov chain $(Z_n)_{n\geq 0}$ such that $Z_0 =1$, and 
\begin{equation}\label{eq:BP0}
    Z_{n+1}=\sum_{i=1}^{Z_n}X_i,
    \end{equation}
where $X_1,X_2,\ldots$ are independent and identically distributed random variables with a common law $\mathbb{P}(X_1=i)=p_i$, for $i\in\mathbb{N}\cup\{0\}$. This is another example of stochastic process such that, depending of the offspring law, does not determine an evolution algebra according to Definition \ref{def:evolalgTian}. However, by the Theorem \ref{theo:mc}, we have an induced Markov Hilbert evolution algebra $\A$ by taking $c_{ki}=p_{ik}$, for $i,k\in \mathbb{N}\cup \{0\}$. Moreover, we can check equation \eqref{eq:condition on pik}. Let us consider first $k=0$. Note that,
\begin{equation}\label{eq:BP1}
\begin{array}{ccl}
\displaystyle\sum_{i=1}^{\infty} p_{i0}&=& \displaystyle\sum_{i=1}^{\infty} \mathbb{P}(Z_{n+1}=0|Z_n=i)\\[.5cm] 
&=&\displaystyle \sum_{i=1}^{\infty} \mathbb{P}\left(\sum_{\ell=1}^{i}X_{\ell}=0\right)\\[.5cm]
&=&\displaystyle \sum_{i=1}^{\infty} \mathbb{P}\left(\bigcap_{\ell=1}^{i}\{X_{\ell}=0\}\right)\\[.5cm]
&=&\displaystyle\sum_{i=1}^{\infty} p_0^{i}\\[.5cm]
&=&\displaystyle\frac{p_0}{1-p_0}.
\end{array}
\end{equation}
The second line in \eqref{eq:BP1} is due to \eqref{eq:BP0} and the independence between the $X_i$'s and $Z_n$. The fourth line is a consequence of the independence of the $X_i$'s. Now, for any $k\in \mathbb{N}$ note that, similarly to the first steps in \eqref{eq:BP1}, we have:
\begin{equation}\label{eq:BP2}
  \displaystyle   \sum_{i=1}^{\infty} p_{ik}  =\displaystyle \sum_{i=1}^{\infty} \mathbb{P}\left(\sum_{\ell=1}^{i}X_{\ell}=k\right).
 \end{equation}
Moreover, $ \big\{\sum_{\ell=1}^{i}X_{\ell}=k\big\}\subset \big\{\sum_{\ell=1}^{i}X_{\ell}\geq 1\big\}$ and, if we consider $s\in(0,1)$, note that the event $\big\{\sum_{\ell=1}^{i}X_{\ell}\geq 1\big\}$ is equivalent to the event $\big\{s^{\sum_{\ell=1}^{i}X_{\ell}}\geq s\big\}$. Thus, \eqref{eq:BP2} and the previous comments imply, by Markov's inequality: 
 \begin{equation*}
\displaystyle   \sum_{i=1}^{\infty} p_{ik}   \leq \displaystyle \sum_{i=1}^{\infty} s^{-1} \mathbb{E}\left(s^{\sum_{\ell=1}^{i}X_{\ell}}\right).
    \end{equation*}
Since the $X_i$'s are independent and identically distributed random variables, if we denote by $\varphi(s)$ the common probability generating function, we have
$$\mathbb{E}\left(s^{\sum_{\ell=1}^{i}X_{\ell}}\right)=\mathbb{E}\left(s^{X_1}\right)^i = \varphi(s)^i.$$
Therefore,
\begin{equation}\label{eq:BP3}
 \displaystyle   \sum_{i=1}^{\infty} p_{ik} \leq s^{-1} \displaystyle \sum_{i=1}^{\infty} \varphi(s)^i,
 \end{equation}
where $\varphi(s)\in [p_0,1)$ provided $s\in (0,1)$. Since $s$ can be arbitrarily chosen in $(0,1)$, take $s=1/2$, and note that from \eqref{eq:BP1} and \eqref{eq:BP3} we conclude 
$$ \displaystyle   \sum_{i=1}^{\infty} p_{ik} \leq \max\left\{\displaystyle\frac{p_0}{1-p_0},\frac{2\varphi(1/2)}{1-\varphi(1/2)}\right\},$$
for all $k\in\mathbb{N}\cup\{0\}$. Hence the equation \eqref{eq:condition on pik} is satisfied, implying that the evolution operator $C:\A_{\mathcal{X}}\longrightarrow \A_{\mathcal{X}}$ is bounded.

\end{example}

Theorem \ref{theo:mc} gains in interest if we realize that it includes as corollaries \cite[Theorem 16]{tian} and \cite[Lemma 4]{tian}. In fact, our result covers all the discrete-time Markov chains with a finite state space, and a wide range of discrete-time Markov chains with infinite, but countable, state space. It is worth pointing out that still there exist some Markov chains for which \eqref{eq:schur test for Markov} does not hold, as we illustrate in the following example.

\begin{example}\label{exe:house} [The house-of-cards Markov chain] Let $\{X_n\}_{n\geq 0}$ be the Markov chain with state space given by $\mathcal{X}=\mathbb{N}\cup\{0\}$ and transition probabilities given by $p_{i 0}=p_i>0$ and $p_{i(i+1)}=1-p_i$, for any $i\in\mathbb{N}$, and $p_{00}=p_0=1-p_{01}$. See Figure \ref{exe:house} for an illustration of the transitions of such a Markov chain. In words, from any state $i\neq 0$ the process jumps to $0$ with probability $p_i$ or it jumps to state $i+1$ with probability $1-p_i$. This model is known as the house-of-cards Markov chain. Notice that to state $0$ we can go from infinitely many states with positive probability. For this chain we can find examples for which \eqref{eq:schur test for Markov} holds, or not.
\begin{enumerate}
\item[i.] If we let $\sum_{i=0}^{\infty}p_i =1$, then $\alpha_i=\beta_i=1$ for any $i$, and $M_1=M_2=1$ is enough to satisfy \eqref{eq:schur test for Markov}. In fact, we would have for any $i\in\mathbb{N}\cup\{0\}$
$$\sum_{k=0}^{\infty}p_{ik}\alpha_k=p_{i0}\alpha_0 + p_{i(i+1)}\alpha_{i+1}=p_i + (1-p_i)=1.$$
Moreover, for any $k\in\mathbb{N}$, we would have
$$\sum_{i=0}^{\infty}p_{ik}\beta_i = p_{(k-1)k}\beta_{k-1}=(1-p_{k-1})<1.$$
while for $k=0$ we would have
$$\sum_{i=0}^{\infty}p_{i0}\beta_i = \sum_{i=0}^{\infty}p_{i}=1,$$
which completes the verification of \eqref{eq:schur test for Markov}.

\smallskip
\item[ii.] Let $p_i=p$ for all $i\in \mathbb{N}\cup\{0\}$, and suppose that \eqref{eq:schur test for Markov} holds. Then, for any $i\in\mathbb{N}\cup\{0\}$ we have
$$\sum_{k=0}^{\infty}p_{ik}\alpha_k =p_{i0}\alpha_0 + p_{i(i+1)}\alpha_{i+1}=p\alpha_0 + (1-p)\alpha_{i+1},$$
which implies that
\begin{equation}\label{eq:house1}
p\,\alpha_0 + (1-p)\alpha_{i+1}\leq M_1\beta_i
\end{equation}
for all $i\in\mathbb{N}\cup\{0\}$. On the other hand, 
$$\sum_{i=0}^{\infty}p_{i0}\beta_i = p \sum_{i=0}^{\infty}\beta_i,$$
which implies by \eqref{eq:schur test for Markov} that
\begin{equation}\label{eq:house1}
\sum_{i=0}^{\infty}\beta_i\leq M_2\alpha_0 <\infty.
\end{equation}
Then $\lim_{i\to \infty}\beta_i =0$, and this in turns implies from \eqref{eq:house1} that $p\,\alpha_0=0$, which is a contradiction.
\end{enumerate}
\end{example}

\begin{figure}
    \centering
    \begin{tikzpicture}[scale=0.8, every node/.style={scale=0.8}]

\draw [thick] (0,0) circle (7pt);
\draw (0,0) node[font=\footnotesize] {$0$};
\draw [thick] (2,0) circle (7pt);
\draw (2,0) node[font=\footnotesize] {$1$};
\draw [thick] (4,0) circle (7pt);
\draw (4,0) node[font=\footnotesize] {$2$};
\draw [thick] (6,0) circle (7pt);
\draw (6,0) node[font=\footnotesize] {$3$};
\draw [thick] (8,0) circle (7pt);
\draw (8,0) node[font=\footnotesize] {$4$};

%%% flechas

% 0 y 1
\draw [thick, directed] (0,0.25) to [out=140,in=220,looseness=8] (0,-0.25);
\draw [thick, reverse directed] (0,0.25) to [bend left=45] (2,0.25);
\draw [thick, directed] (0,-0.25) to [bend right=45] (2,-0.25);
%\draw [thick, directed] (0,0.25) to [out=140,in=220,looseness=8] (0,-0.25);
\draw [thick, directed] (2,-0.25) to [bend right=45] (4,-0.25);
\draw [thick, directed] (4,-0.25) to [bend right=45] (6,-0.25);
\draw [thick, directed] (6,-0.25) to [bend right=45] (8,-0.25);
\draw [dashed, gray, directed] (8,-0.25) to [bend right=90] (10,-0.5);
%\draw [dashed, gray, reverse directed] (10,-1.25) to [bend left=90] (0,-0.25);
% 1 y 2
%\draw [thick, reverse directed] (2,0.25) to [bend left=45] (4,0.25);
\draw [thick, reverse directed] (0,0.25) to [bend left=60] (4,0.25);
% 2 y 3
\draw [thick, reverse directed] (0,0.25) to [bend left=75] (6,0.25);
\draw [thick, reverse directed] (0,0.25) to [bend left=90] (8,0.25);
\draw [dashed, gray, reverse directed] (0,0.25) to [bend left=90] (10,1.25);
%\draw [thick, reverse directed] (4.25,0) to (5.75,0);
%\draw [thick, reverse directed] (2.25,0) to (3.75,0);
%\draw [thick, reverse directed] (0.25,0) to (1.75,0);
% 3 y 4
%\draw [thick, reverse directed] (6.25,0) to (7.75,0);

% probab

\draw (-1.2,0) node[font=\footnotesize] {$p_0$};
\draw (1.8,0.8) node[font=\footnotesize] {$p_1$};
\draw (1.3,-1.1) node[font=\footnotesize] {$1-p_0$};
\draw (3.3,-1.1) node[font=\footnotesize] {$1-p_1$};
\draw (5.3,-1.1) node[font=\footnotesize] {$1-p_2$};
\draw (7.3,-1.1) node[font=\footnotesize] {$1-p_3$};
\draw (3.4,1.3) node[font=\footnotesize] {$p_2$};
\draw (4.7,2) node[font=\footnotesize] {$p_3$};
\draw (6.1,2.6) node[font=\footnotesize] {$p_4$};
\draw (9,0) node[font=\footnotesize] {$\cdots$};

\end{tikzpicture}
    \caption{Graphical representation of the Markov chain of Example \ref{exe:house}.}
    \label{fig:mc}
\end{figure}
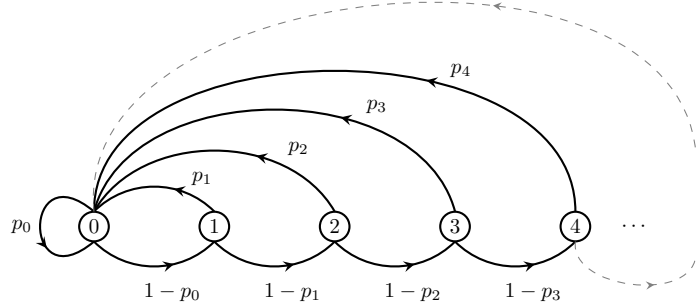

As pointed out in \cite[Lemma 4 of Chapter 4]{tian}, and now extended to the cases where \eqref{eq:schur test for Markov} holds, the evolution operator can be used in the context of evolution algebras as the transition matrix, whose entries are transition probabilities, is used in the context of Markov chains. %In Theorem \ref{theo:mc} we prove that the same holds for Hilbert Evolution Algebras defined from a discrete-time Markov chain. 
This is the spirit of \eqref{eq:EOproof}, which can be easily extended to any $v\in \mathcal{A}_{\mathcal{X}}$ such that $v=\sum_{i=1}^{\infty} \alpha_i e_i$, with $\{\alpha_i\}_{i\in\mathbb{N}}$ being a probability distribution on $\mathcal{X}$; i.e., $\alpha_i\in[0,1]$ for any $i\in\mathbb{N}$ and $\sum_{i\in\mathbb{N}} \alpha_i=1$. In other words, $v$ is a (possibly infinite) convex combination of points of the orthonormal basis $\{e_i\}_{i\in\mathbb{N}}$. Let us denote by $\overline{\conv}(A)$ the closed convex hull of the set $A$, that is, the closure of the convex hull $\conv(A)$. 

\begin{corollary}\label{cor:last}
Consider a Markov Hilbert evolution algebra $\mathcal{A}_{\mathcal{X}}$ satisfying the equations \eqref{eq:schur test for Markov}
and let $v\in \overline{\conv}\left(\{e_i\}_{i\in\mathbb{N}}\right)$ such that $v=\sum_{i=1}^{\infty} \alpha_i e_i$. Then
$$C^{n}(v)=\sum_{i=1}^{\infty}\alpha^n_ i(v) e_i,$$
where $\alpha^n_i(v)=\mathbb{P}(X_n=x_i)$ provided $\mathbb{P}(X_0=x_k)=\alpha_{k}$, for $k\in\mathbb{N}$.
\end{corollary}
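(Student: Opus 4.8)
The plan is to derive the identity by linearity and continuity of $C^n$ from the already-established formula \eqref{eq:EOproof}, being careful that the relevant series converge and that one may interchange $C^n$ with an infinite sum. First I would fix $v=\sum_{i=1}^\infty \alpha_i e_i\in\overline{\conv}(\{e_i\}_{i\in\mathbb{N}})$, so that $\alpha_i\in[0,1]$ and $\sum_i\alpha_i=1$; in particular $\sum_i\alpha_i^2\le\sum_i\alpha_i=1<\infty$, so by Proposition \ref{prop:convergence of series} the vector $v$ indeed lies in $\A_{\mathcal{X}}$ and the partial sums $v_N:=\sum_{i=1}^N\alpha_i e_i$ converge to $v$ in norm. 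Under the hypothesis \eqref{eq:schur test for Markov}, Theorem \ref{theo:mc} gives that $C$, hence each $C^n$, is a bounded linear operator on all of $\A_{\mathcal{X}}$, so $C^n(v)=\lim_{N\to\infty}C^n(v_N)$ in $\A_{\mathcal{X}}$.

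Next I would compute $C^n(v_N)$ using finite linearity together with \eqref{eq:EOproof}:
\begin{equation*}
C^n(v_N)=\sum_{i=1}^N\alpha_i\,C^n(e_i)=\sum_{i=1}^N\alpha_i\sum_{k=1}^\infty p_{ik}^{(n)}e_k=\sum_{k=1}^\infty\Bigl(\sum_{i=1}^N\alpha_i p_{ik}^{(n)}\Bigr)e_k,
\end{equation*}
where the last interchange is legitimate because it is a finite sum of convergent series. Letting $N\to\infty$ and writing $\alpha_k^n(v):=\sum_{i=1}^\infty\alpha_i p_{ik}^{(n)}$, I would identify $\alpha_k^n(v)$ as the $k$-th coordinate of $C^n(v)$; since convergence in $\A_{\mathcal{X}}$ implies coordinatewise convergence (the coordinate functionals $w\mapsto\langle w,e_k\rangle$ are continuous), we get $\langle C^n(v),e_k\rangle=\lim_N\sum_{i=1}^N\alpha_i p_{ik}^{(n)}=\alpha_k^n(v)$, hence $C^n(v)=\sum_{k=1}^\infty\alpha_k^n(v)e_k$. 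Finally, the probabilistic interpretation is just the law of total probability: if $\mathbb{P}(X_0=x_i)=\alpha_i$, then by conditioning on $X_0$ and using the Markov property,
\begin{equation*}
\mathbb{P}(X_n=x_k)=\sum_{i=1}^\infty\mathbb{P}(X_0=x_i)\,\mathbb{P}(X_n=x_k\mid X_0=x_i)=\sum_{i=1}^\infty\alpha_i p_{ik}^{(n)}=\alpha_k^n(v),
\end{equation*}
which is exactly the claimed formula.

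The only genuinely delicate point is the passage to the limit inside the infinite sum over $k$: one must be sure that $\sum_k\bigl(\sum_i\alpha_i p_{ik}^{(n)}\bigr)e_k$ converges in $\A_{\mathcal{X}}$ and equals $\lim_N C^n(v_N)$. This is handled by the boundedness of $C^n$: $\|C^n(v)-C^n(v_N)\|\le\|C^n\|\,\|v-v_N\|\to0$, so $C^n(v_N)\to C^n(v)$ in norm, and a norm-convergent sequence of vectors has coordinatewise limits given by the limit vector's coordinates; combining this with the explicit coordinates of $C^n(v_N)$ computed above yields both the convergence of the series and the desired identity. No new estimate beyond what Theorem \ref{theo:mc} already provides is needed; the argument is essentially ``bounded operators commute with norm-convergent series,'' specialised to the natural basis.
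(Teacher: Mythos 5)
Your proposal is correct and follows essentially the same route as the paper: apply $C^n$ to the expansion $v=\sum_i\alpha_i e_i$, use \eqref{eq:EOproof} termwise, interchange the order of summation, and identify the resulting coefficients via the law of total probability. The only difference is that you explicitly justify passing $C^n$ through the infinite series (via boundedness of $C^n$ from Theorem \ref{theo:mc} and continuity of the coordinate functionals), a step the paper's proof performs without comment.
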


\begin{proof}
Let $v\in \overline{\conv}\left(\{e_i\}_{i\in\mathbb{N}}\right)$ such that $v=\sum_{i=1}^{\infty} \alpha_i e_i$, and assume that $\mathbb{P}(X_0=x_k)=\alpha_{k}$, for $k\in\mathbb{N}$. The law of total probability implies
$$\mathbb{P}(X_n=x_i)=\sum_{k=1}^{\infty}\mathbb{P}(X_n=x_i|X_0=x_k)\mathbb{P}(X_0=x_k)=\sum_{k=1}^{\infty}p_{ki}^{(n)}\alpha_k.$$
The proof is finished if we let $\alpha_{i}^n(v):=\mathbb{P}(X_n=x_i)$ and we realize that \eqref{eq:EOproof} implies
$$C^n(v)=\sum_{k=1}^{\infty} \alpha_k\, C^n(e_k)= \sum_{k=1}^{\infty} \alpha_k\left\{\sum_{i=1}^{\infty}p_{ki}^{(n)}e_i\right\}=\sum_{i=1}^{\infty}\left\{\sum_{k=1}^{\infty}\alpha_k\,p_{ki}^{(n)}\right\}e_i.$$
\end{proof}

Let us finish with a comment about this connection with Markov chains. In words, the previous results claim that it is possible to model some random phenomena with an approach of Hilbert evolution algebras. When one uses Markov chains the first step is to identify the state space of the process, the second one is to determine the transition probabilities. The conclusion of this section is that if we associate to each possible state of the process a generator of the algebraic structure, then the whole evolution of the process can be observed through consecutive applications of the evolution operator,  provided \eqref{eq:schur test for Markov}, or \eqref{eq:condition on pik} holds. Indeed, Corollary \ref{cor:last} can be applied by assuming that the application of $C$ to $v\in \overline{\conv}\left(\{e_i\}_{i\in\mathbb{N}}\right)$ represents that the process starts from the state represented by $e_i$ with probability $\alpha_i$. Then, the application of the evolution operator $n$ times allows to discover with which probability the process will be in a given state at time $n$. We point out that our extension of the concept of evolution algebra allow to advance in the analysis proposed in \cite[Chapter 4]{tian}, extending it to a wide class of infinite-dimensional Markov chains.

\smallskip
\section{Acknowledgments}
Part of this work was carried out during a visit of P.C. at the Universidade Federal de Pernambuco (UFPE); and visits of P.M.R. at the Universidade Federal do ABC (UFABC) and at the Universidad Nacional de la Patagonia ``San Juan Bosco'' (UNPSJB). The visit at UNPSJB was during the realization of the School EMALCA 2019. The authors are grateful with these institutions, and with the organizers of the School, for their hospitality and support. Part of this work has been supported by Fundação de Amparo à Pesquisa do Estado de São Paulo - FAPESP (Grant 2017/10555-0).

%%%%%%%%%%%%%%%%%%%%%%%%%%%%%%%%%%%%%%%%%%%%%%%%%%%%%%%%%%%%%  REFERENCES
%%%%%%%%%%%%%%%%%%%%%%%%%%%%%%%%%%%%%%%%%%%%%%%%%%%

\bigskip
%\newpage

\end{document}